\documentclass{article}
\usepackage{amsmath}
\usepackage{amssymb}
\usepackage{amsthm} 
\usepackage[utf8]{inputenc}
\usepackage{cleveref}
\usepackage{blindtext}
\usepackage{titlesec}
\title{Generalized Harnack Inequality for Mean Curvature Flow and Ancient Solutions}
\author{Junyoung Park }
\date{}
\newtheorem{theorem}{Theorem}[section]
\newtheorem{lemma}{Lemma}[section]
\newtheorem{corollary}{Corollary}[section]
\newtheorem{remark}{Remark}[section]
\usepackage{geometry}
\geometry{
 a4paper,
 left=20mm,
 right=20mm,
 top=20mm,
 bottom=30mm
 }

\begin{document}

\maketitle
\abstract The goal of this paper is to relax convexity assumption on some classical results in mean curvature flow. In the first half of the paper, we prove a generalized version of Hamilton's differential Harnack inequality which holds for mean convex solutions to mean curvature flow with a lower bound on $\frac{\lambda_1}{H}$ where $\lambda_1$ is the smallest principal curvature. Then, we use classical maximum principle to provide several characterizations of family of shrinking spheres for closed, mean convex ancient 
solution to mean curvature flow with a lower bound on $\frac{\lambda_1 + .. + \lambda_k}{H}$ for some $1 \leq k \leq d-1$, where $\lambda_1 \leq \lambda_2 \leq .. \leq \lambda_d$ are the principal curvatures.
\section{Introduction}
Let $M^d$ be a smooth $d$-dimensional manifold. A smooth one-parameter family of hypersurfaces $\{M_t^d\}_{t \in I}$ is a smooth solution to mean curvature flow if there exists a smooth one parameter family of immersions $X_t : M^d \to \mathbb{R}^{d+1}$ which satisfies the equation
$$\frac{\partial X_t}{\partial t}(p,t) = \Vec{H}(p,t)$$
where $\Vec{H}$ is the mean curvature vector. This geometric equation has been studied extensively in the past several decades to see if one can evolve a given hypersurface to a relatively simpler one. Unfortunately, in many cases, the solution develops singularity in finite time due to the nonlinearity of the equation. Therefore, it is crucial to have a detailed understanding of the nature of the singularities, so that one might come up with a systematic way to flow through the singularities. \\

A standard method of analyzing singularities is to use blowup arguments. As the solution approaches singular time, we `magnify' at larger and larger scale near the point where singularities form, and take `the limit' of the rescaled solutions. In the end, one is left with a solution to the mean curvature flow which is defined for all negative time. Such solutions are called ancient solutions. Therefore, the study of ancient solutions is important since they model how singularities form. \\

In this paper, we are concerned with smooth solutions to mean curvature flow which are mean convex ($H > 0$) and have a global lower bound on
$$\frac{\lambda_1 + .. + \lambda_{k}}{H}$$
for some $1 \leq k \leq d-1$, where $\lambda_1 \leq \lambda_2 \leq .. \leq \lambda_{d-1}$ are the principal curvatures. Our goal is to extend two well-known results which hold for convex solutions to mean curvature flow, namely Hamilton's differential Harnack inequality (\cite{hamilton1995harnack}, theorem 1.1) and characterizations of family of shrinking spheres (\cite{10.4310/jdg/1442364652}, theorem 1.1) to our more general class of solutions.\\

 Hamilton's differential Harnack inequality for convex solutions to mean curvature flow is one of the tools used to analyze ancient solutions to mean curvature flow. Although the Harnack inequality is useful, the convexity condition $A \geq 0$ is a rather strong curvature condition and thus prevents it from being applied to more general situations. In the first half of the paper, we extend the differential Harnack inequality to our more general class of solutions with $k = 1$.
\begin{theorem}
    Let $\{M_t^d\}_{t \in [0, T)}$ be a mean convex solution to mean curvature flow such that it is either closed or complete with bounded curvature and
    $$A + \epsilon_0 Hg \geq 0 \text{ in }M^d \times [0, T)$$
    for some $\epsilon_0 \geq 0$. Let $$\phi(t) = \sup_{M^d \times (0, t]}H < \infty$$ Then
    $$Z(V) = (A + \epsilon_0Hg)(V,V) + \frac{2}{1 + \epsilon_0d}\nabla H\cdot V + \frac{H}{2t} + \frac{1}{(1 + \epsilon_0 d)^2}\nabla_tH + \frac{\epsilon_0}{(1 + \epsilon_0d)^2}H^3 + \frac{3\epsilon_0}{4(1 + \epsilon_0d)}\phi^2(t)H \geq 0$$
    for all $V \in TM^d$, $t \in (0, T)$. 
\end{theorem}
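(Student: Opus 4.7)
My plan is to follow Hamilton's original maximum principle approach from \cite{hamilton1995harnack}, suitably modified so that the Harnack quadratic accommodates the weakened positivity $\tilde A := A + \epsilon_0 H g \geq 0$ instead of strict convexity. The strategy has three stages: (i) regard $Z(V)$ as a scalar function on $TM$ depending on $(p,t,V)$; (ii) derive a differential inequality of the form $(\partial_t - \Delta) Z \geq 0$ at points where $Z$ attains a minimum in $V$; (iii) apply the maximum principle, using $H/(2t) \to \infty$ as $t \to 0^+$ as the initial barrier.

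First I would collect the standard MCF evolution equations for the ingredients of $Z$, namely $H$, $A$, $g$, $\nabla H$ and $\nabla_t H$, using Simons' identity and the commutator between $\nabla$ and $\partial_t - \Delta$ on evolving hypersurfaces. Setting $c := 1+\epsilon_0 d$, I would then compute $(\partial_t - \Delta) Z$ and track how the shift $\epsilon_0 H g$ enters. The algebra should produce a schematic identity
\begin{equation*}
  (\partial_t - \Delta) Z \;\geq\; \tfrac{2}{c^{2}}\,\tilde A\!\left(V + \tfrac{1}{c}\tilde A^{-1}\nabla H,\; V + \tfrac{1}{c}\tilde A^{-1}\nabla H\right)\cdot(\text{positive curvature factor}) \;+\; E_{\epsilon_0}(V,t),
\end{equation*}
in which the first term is manifestly nonnegative (Hamilton's completion of the square) and $E_{\epsilon_0}$ collects the $\epsilon_0$-induced errors. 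The coefficients $\tfrac{2}{c}$ and $\tfrac{1}{c^{2}}$ in the statement are precisely what is needed for the gradient cross terms to cancel in this completion. The correction $\tfrac{\epsilon_0}{c^{2}} H^{3}$ is designed to absorb the cubic reaction of order $\epsilon_0 H\cdot |A|^{2} H$ produced through Simons' identity by the shift, while $\tfrac{3\epsilon_0}{4c}\phi^{2}(t)H$ is designed to absorb a cross term of order $\epsilon_0 H\cdot \nabla H$ via a Peter--Paul inequality in which $\phi(t)$ serves as the uniform bound $H\leq \phi(t)$.

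With the differential inequality in hand, the maximum principle runs as follows in the compact case. Since $H/(2t) \to +\infty$ as $t\to 0^{+}$ and the other ingredients remain bounded there, $Z>0$ for small $t$. At a putative first time $t_{0}>0$ and point $p_{0}$ where $Z(V_{0})=0$, I extend $V_{0}$ into a neighborhood by parallel transport so that $\nabla V_{0} = \nabla_{t} V_{0} = 0$ at $(p_{0},t_{0})$; the computed inequality then forces $(\partial_{t}-\Delta) Z \geq 0$ at that point, contradicting the strong maximum principle. The complete, bounded-curvature case reduces to the compact one via standard cutoff arguments, with $\phi(t)<\infty$ keeping the barrier under control. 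A mild technical subtlety is that $\tilde A$ can have zero eigenvalues, so the formal completion of the square is not literally available; the standard fix is to replace $\epsilon_{0}$ by $\epsilon_{0}+\delta$ and $Z$ by $Z+\delta/t$, prove the result in the strictly positive regime, and send $\delta\to 0$.

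The main obstacle I anticipate is the purely computational one of tracking the $\epsilon_{0}$-corrections in $(\partial_{t}-\Delta) Z$: the shift by $\epsilon_{0} Hg$ produces numerous cross terms of orders $\epsilon_{0}$ and $\epsilon_{0}^{2}$, and the specific coefficients in the statement are presumably the unique ones that make the identity close. Verifying this algebraic closure --- and in particular that the $\tfrac{3}{4}$ factor in front of $\phi^{2}(t)H$ is exactly what is forced by the sharp Peter--Paul inequality --- is the principal challenge. A useful sanity check throughout is the $\epsilon_{0}=0$ limit, in which every new term must vanish and Hamilton's original inequality must be recovered verbatim.
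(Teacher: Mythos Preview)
Your overall architecture --- perturb $\epsilon_0$ to some $\epsilon>\epsilon_0$ so that $\tilde A$ is strictly positive and has a unique minimizing $V$, use the $H/(2t)$ barrier at $t\to 0^{+}$, run Hamilton's maximum principle, and let $\epsilon\downarrow\epsilon_0$ at the end --- matches the paper exactly. Two points in your execution plan, however, would derail the computation if followed literally.

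First, the parallel extension of the minimizing $V_0$ (so that $\nabla V_0=\nabla_t V_0=0$) is not the choice that works. Following Hamilton, the paper builds a \emph{specific non-parallel} local extension, arranged so that at $(p_0,t_0)$
\[
\nabla_a V_b=\delta H A_{ab}+f(t_0)\,g_{ab},\qquad (\nabla_t-\Delta)V_a=-\delta A_{ab}\nabla_b H-\tfrac{2f(t_0)}{\delta}\,V_a,
\]
with $\delta=\tfrac{1}{1+\epsilon d}$ and $f(t)=\tfrac{1}{2t}+\tfrac{3}{2}\epsilon\delta\,\phi^2(\hat T)$. These prescribed first derivatives are exactly what is needed for the many cross terms in $(\nabla_t-\Delta)Z$ to collapse to
\[
(\nabla_t-\Delta)Z=\bigl(|A|^2-\tfrac{4f}{\delta}\bigr)Z+\bigl(6\epsilon^2\delta H^2-4\epsilon f\bigr)\nabla_a H\,V_a+\Bigl(\tfrac{4fg}{\delta}-2f^2-2\epsilon d f^2+g'(t)\Bigr)H.
\]
With the parallel extension the $\nabla V$ and $(\nabla_t-\Delta)V$ contributions simply drop out, and the remaining expression (which still contains uncontrolled terms like $4\delta^2 H A_{ab}\nabla_t A_{ab}$ and $-2\delta^2 H^2 A_{ab}A_{bc}A_{ca}$) does not close.

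Second, the closing mechanism is neither a completion of the square with $\tilde A^{-1}$ nor a Peter--Paul absorption. At the minimizing $V_0$ the first variation reads $A_{ab}V_b+\epsilon HV_a+\delta\nabla_a H=0$, which forces $\nabla_a H\,V_a=-\tfrac{1}{\delta}(A+\epsilon Hg)(V,V)\le 0$. One therefore only needs the scalar coefficient $6\epsilon^2\delta H^2-4\epsilon f$ to be nonpositive; this is precisely where the bound $H\le\phi(\hat T)$ and the constant $\tfrac{3}{4}$ in the $\phi^2 H$ term enter (through the definition of $f$), not via any Cauchy--Schwarz step. Likewise the $\tfrac{\epsilon}{(1+\epsilon d)^2}H^3$ correction is dictated by the $-4f\epsilon\delta H^3$ produced by the non-parallel extension in the $\nabla V\!\cdot\!\nabla V$ term, not by a Simons-type cubic reaction. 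So your heuristics for where each correction comes from point you to the wrong part of the calculation, even though the final inequality is the same.
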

In the weakly convex case, we can set $\epsilon_0 = 0$, thus recovering the classical Harnack inequality.
\begin{corollary}[Recovery of classical Harnack inequality]
    If $\{M_t^d\}_{t \in [0, T)}$ is weakly convex solution to mean curvature flow which is either closed or complete with bounded curvature, 
    $$Z(V) = A(V,V) + 2\nabla H\cdot V + \frac{H}{2t} + \nabla_tH  \geq 0$$
    for $V \in TM^d$, $t \in (0, T)$.
\end{corollary}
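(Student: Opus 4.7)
The plan is to obtain this corollary as the $\epsilon_0 = 0$ specialization of Theorem 1.1. First I would check that the curvature hypothesis matches: weak convexity $A \geq 0$ is literally the condition $A + \epsilon_0 H g \geq 0$ with $\epsilon_0 = 0$. Next I would verify the auxiliary boundedness $\phi(t) = \sup_{M^d \times (0,t]} H < \infty$ required by Theorem 1.1. In the closed case this is automatic from smoothness on $[0,t]$; in the complete case the bounded-curvature assumption supplies it directly. With both hypotheses in place, Theorem 1.1 applies to $\{M_t^d\}_{t \in [0,T)}$ with parameter $\epsilon_0 = 0$.

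What is then left is a direct algebraic simplification, which I would carry out by plugging $\epsilon_0 = 0$ into the expression for $Z(V)$ in Theorem 1.1. The prefactors $\tfrac{2}{1+\epsilon_0 d}$ and $\tfrac{1}{(1+\epsilon_0 d)^2}$ both collapse to $1$, while the cubic correction $\tfrac{\epsilon_0}{(1+\epsilon_0 d)^2}H^3$ and the mixed term $\tfrac{3\epsilon_0}{4(1+\epsilon_0 d)}\phi^2(t)H$ vanish identically, leaving
\[
Z(V) = A(V,V) + 2\nabla H \cdot V + \frac{H}{2t} + \nabla_t H.
\]
The nonnegativity of this quantity is then inherited immediately from the conclusion of Theorem 1.1.

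I expect no genuine obstacle, since the corollary is designed as a pure specialization. The only point worth being careful about is that the hypothesis $\phi(t) < \infty$ of Theorem 1.1 disappears from the final inequality once $\epsilon_0 = 0$ is substituted, which could tempt one to drop it from the hypotheses of the corollary. The closed or complete-with-bounded-curvature assumption stated in the corollary is precisely what guarantees this auxiliary boundedness so that Theorem 1.1 may legitimately be invoked before the $\phi$-dependent term is discarded.
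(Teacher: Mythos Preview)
Your proposal is correct and matches the paper's approach exactly: the paper does not give a separate proof of this corollary but simply remarks that one sets $\epsilon_0 = 0$ in Theorem~1.1, which is precisely the specialization you carry out. Your additional care in verifying that the closed or complete-with-bounded-curvature hypothesis guarantees $\phi(t) < \infty$ (so that Theorem~1.1 is legitimately applicable before the $\phi$-dependent term drops out) is a point the paper leaves implicit.
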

In the second half of the paper, we consider ancient solutions to mean curvature flow. In \cite{10.4310/jdg/1442364652}, Huisken and Sinestrari provided several characterizations of shrinking spheres for closed, convex ancient solution to mean curvature flow. 
\begin{theorem}[cf. \cite{10.4310/jdg/1442364652}, theorem 1.1]
Let $\{M_t^d\}_{t \in (-\infty, 0)}$ be a closed, convex ancient solution to mean curvature flow of dimension $d \geq 2$. Then the following are equivalent. \\\\
(1) $\{M_t^d\}_{t \in (-\infty, 0)}$ is a family of shrinking $d$-dimensional spheres.\\
(2) $\liminf_{t \to -\infty}\min_{M^d\times \{t\}}\frac{\lambda_1}{H} > 0$ i.e $\{M_t^d\}_{t \in (-\infty, 0)}$ is uniformly strictly convex.\\
(3) $\limsup_{t \to -\infty}\frac{\textit{{\normalfont diam}}(M_t^d)}{\sqrt{-t}} < \infty$. \\
(4) There exists $C > 0$ so that $\max_{M^d \times \{t\}}H \leq C\min_{M^d \times \{t\}}H$
for all sufficiently small $t$.\\
(5) $\limsup_{t \to -\infty}\sqrt{-t}\max_{M^d \times \{t\}}H < \infty$\\
 (6) $\limsup_{t \to - \infty}\frac{\rho_+(t)}{\rho_-(t)} < \infty$
 where $\rho_+(t)$ is the circumradius of $M_t^d$ and $\rho_-(t)$ is the inradius of $M_t^d$.\\
 (7) $\limsup_{t \to -\infty}\frac{|M_t^d|^{d+1}}{|\Omega_t|^d} < \infty$ where $\partial \Omega_t = M_t^d$
\end{theorem}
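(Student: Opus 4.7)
The seven equivalences are organized around condition (2). The key starting point is the classical Huisken pinching observation: for a convex solution to mean curvature flow, $\min_{M_t}\lambda_1/H$ is monotone non-decreasing in $t$. Indeed, at an interior spatial minimum of $\lambda_1/H$ the gradient vanishes, the Laplacian is nonnegative, and a direct computation with the evolution equations of $\lambda_1$ and $H$ shows the reaction term has a favorable sign under the convexity hypothesis $\lambda_1\ge 0$. As a consequence, the $\liminf$ bound in (2) automatically promotes to a uniform space-time bound $\lambda_1\ge c_0 H>0$, and the web of equivalences reduces to two tasks: (a) showing this uniform pinching forces the flow to be a family of shrinking spheres, and (b) showing that each of (3)--(7) produces such a pinching at some sufficiently negative time. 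The implications (1) $\Rightarrow$ (k) for $k\ge 2$ are immediate from the explicit shrinking-sphere solution $\rho(t)=\sqrt{-2dt}$.

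\medskip

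\noindent\textbf{Step 1: (2) $\Rightarrow$ (1).} From the monotonicity discussed above one has $\lambda_1\ge c_0 H$ on all of $M^d\times(-\infty,0)$. Parabolically rescale about a sequence of times $t_k\to -\infty$; uniform convexity together with Hamilton's Harnack inequality for convex MCF (\cite{hamilton1995harnack}) provides curvature and diameter control sufficient for smooth subsequential Cheeger--Gromov convergence to a uniformly convex ancient limit flow. On the limit, equality in Harnack is achieved along the chosen sequence, so the limit is a self-similar shrinker, and by Huisken's classification of closed, convex self-shrinkers in $\mathbb{R}^{d+1}$ it must be a round sphere. Rigidity then transfers this back to the original flow: applying the strong maximum principle to a traceless quantity such as $|A|^2-H^2/d$ (which satisfies a favorable evolution under convexity and vanishes identically on a sphere) forces $M^d\times(-\infty,0)$ itself to be a family of shrinking spheres.

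\medskip

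\noindent\textbf{Step 2: (3)--(7) $\Rightarrow$ (2).} The purely geometric equivalences (3) $\Leftrightarrow$ (6) $\Leftrightarrow$ (7) follow from standard convex-body comparisons: for smooth convex hypersurfaces the diameter, inradius, circumradius, $|M_t|$, and $|\Omega_t|$ are mutually comparable up to dimensional constants, via the isoperimetric inequality and Jung/Steinhagen-type bounds. The passage between the geometric and the curvature hypotheses is governed by the evolution $\partial_t H=\Delta H+|A|^2 H$ together with the pinching $H^2/d\le|A|^2\le H^2$ valid for convex flows, and by Hamilton's Harnack inequality which controls the space-time oscillation of $H$. Each of (3)--(7) is then used to extract, through a parabolic blow-down about $t_k\to-\infty$, a smooth convex ancient limit flow satisfying one of the other sphere criteria; the identification of this limit as a shrinking sphere yields $\lambda_1/H\to 1/d$ along the sequence, which by the monotonicity of Step 1 gives the uniform lower bound needed for (2).

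\medskip

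\noindent\textbf{Main obstacle.} The most delicate links are (4) $\Rightarrow$ (2) and (5) $\Rightarrow$ (2), since these hypotheses provide only scalar control on $H$ and no a priori information on the full spectrum of $A$. The Harnack inequality supplies compactness of the rescaled flows, but ruling out a non-spherical convex ancient limit (such as an Angenent-type oval or a non-round shrinker in higher dimension) requires combining the scalar control with the equality case in Harnack along the blow-down sequence and invoking the uniqueness of the round sphere among closed convex shrinkers. Organizing this rigidity argument so that it applies to \emph{every} rescaling sequence, and thereby yields a uniform (not merely subsequential) lower bound on $\lambda_1/H$ for $t$ sufficiently negative, is where the principal technical effort lies.
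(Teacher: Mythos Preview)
The paper does not give its own proof of this statement: Theorem~1.2 is quoted verbatim from Huisken--Sinestrari (the ``cf.'' reference) and is used only as a black box in the proof of Theorem~1.3, where the implications $(2)\Rightarrow(1)$ and $(3)\Rightarrow(1)$ for convex ancient solutions are invoked by citation. So there is no in-paper proof to compare against; what follows are comments on the soundness of your sketch.

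Your organizing principle (reduce everything to the pinching condition (2) via the monotonicity of $\min\lambda_1/H$) is the right one, and the monotonicity itself is correct and is exactly what the paper uses for its generalized quantities. However, Step~1 contains a genuine error: equality in Hamilton's differential Harnack inequality for mean curvature flow characterizes \emph{translating} solitons (or, in the $[0,T)$ formulation with the $H/2t$ term, homothetically expanding solitons), not shrinkers. So ``equality in Harnack along the blow-down sequence $\Rightarrow$ self-shrinker'' is false as stated, and this is the load-bearing identification in your argument for $(2)\Rightarrow(1)$. The correct mechanism for identifying a blow-down limit as a self-shrinker is Huisken's monotonicity formula (the weighted area is constant on the limit, forcing the shrinker equation), after which Huisken's classification of compact mean-convex self-shrinkers gives the sphere; alternatively, in the original Huisken--Sinestrari argument one exploits the uniform pinching directly via width/diameter estimates rather than through a Harnack rigidity step.

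Separately, your Step~2 is really a plan rather than a proof: you correctly flag that $(4)\Rightarrow(2)$ and $(5)\Rightarrow(2)$ are the delicate links, but the paragraph does not supply the missing ingredient (ruling out non-spherical convex ancient limits from scalar control on $H$ alone). Since you yourself identify this as the ``principal technical effort'' and leave it open, the sketch as written does not close the equivalence.
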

As in the first half of the paper, our objective is to relax the convexity assumption and obtain an analogous characterization result for closed, mean convex ancient solutions of $d \geq 2$ which have a global lower bound on
$$\frac{\lambda_1 + .. + \lambda_k}{H}$$
for some $1 \leq k \leq d-1$.
Notice that unlike in the convex case, the solution is not necessarily embedded. Since we do not wish to assume that our solution is embedded, we do not know if there exists an open domain  $\Omega_t \subset \mathbb{R}^{d+1}$ so that $\partial \Omega_t = M_t^d$. Thus, we ignore condition (6) and (7), and focus our attention to the first five conditions instead.
\begin{theorem}
 Let $\{M^d_t\}_{t \in (-\infty, 0)}$ be a closed, mean convex, ancient solution to mean curvature flow of dimension $d \geq 2$ such that
 $$\liminf_{t \to -\infty}\min_{M^d \times \{t\}}\frac{\lambda_1 + .. + \lambda_{k}}{H}> -\infty$$
 for some $1 \leq k \leq d-1$ and $$\limsup_{t \to -\infty}\max_{M^d \times \{t\}}H< \infty$$ Then the following are equivalent.\\\\
 (1) $\{M_t^d\}_{t \in (-\infty, 0)}$ is a family of shrinking $d$-dimensional spheres.\\\\
 (2) $$\liminf_{t \to -\infty}\min_{M^d\times \{t\}}\frac{\lambda_1}{H} > 0$$ i.e $\{M_t^d\}_{t \in (-\infty, 0)}$ is uniformly strictly convex.\\\\
 (3)$$\limsup_{t \to -\infty} \max_{M^d \times \{t\}}H\textit{{\normalfont diam}}(M^d_t) < \infty$$
 where $\textit{{\normalfont diam}}(M^d_t)$ denote the diameter of $M_t^d \subset \mathbb{R}^{d+1}$.\\\\
 (4) There exists $C > 0$ so that $\max_{M^d \times \{t\}}H \leq C\min_{M^d \times \{t\}}H$ for all sufficiently small $t$.\\\\
 (5) $\limsup_{t \to -\infty}\sqrt{-t}\max_{M^d \times \{t\}}H < \infty$.
\end{theorem}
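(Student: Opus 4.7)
The plan is to adapt Huisken--Sinestrari's cyclic proof of Theorem 1.2, substituting the generalized Harnack (Theorem 1.1) for Hamilton's Harnack wherever convexity was used. The easy direction (1) $\Rightarrow$ (2)--(5) follows by direct computation on the shrinking family $\{\partial B_{\sqrt{-2dt}}(0)\}$. The converse I would split into the separate implication (2) $\Rightarrow$ (1) and the cycle (5) $\Rightarrow$ (4) $\Rightarrow$ (3) $\Rightarrow$ (2).

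For (2) $\Rightarrow$ (1), the hypothesis yields a time $T_0$ past which the evolution is uniformly strictly convex. After a time shift, the sub-family $\{M_t\}_{t < T_0}$ is a closed convex ancient MCF satisfying condition (2) of Theorem 1.2 and is therefore a family of shrinking spheres; forward uniqueness of MCF extends this identification to the full ancient family.

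The cycle (5) $\Rightarrow$ (4) $\Rightarrow$ (3) $\Rightarrow$ (2) is the substantive part. Given (5), integrating $Z(V) \geq 0$ of Theorem 1.1 along space-time curves produces pointwise comparisons of $H$ at distinct points of $M_t$; the extra nonnegative lower-order terms in $Z(V)$ relative to Hamilton's classical Harnack are controlled by our standing hypothesis $\limsup \max H < \infty$ together with (5). This yields the oscillation bound (4), which combined with the universal inequality $\max H \geq d/\mathrm{diam}(M_t)$ implies (3). To close back to (2) I would perform the parabolic rescaling $\tilde M_s = e^{s/2} M_{-e^{-s}}$: hypotheses (5) and (3) give uniform bounds on the rescaled curvature and rescaled diameter respectively, so by standard compactness and Huisken's monotonicity formula subsequential limits as $s \to -\infty$ are closed, mean convex self-shrinkers. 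Huisken's rigidity classification identifies these limits as the round sphere, and transporting the identity $\lambda_1/H \equiv 1/d$ back by scale invariance delivers (2).

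The principal obstacle is that Theorem 1.1 requires $A + \epsilon_0 H g \geq 0$, which matches the standing hypothesis only in the case $k = 1$. For $k > 1$ only the partial trace $\lambda_1 + \cdots + \lambda_k$ is controlled, and the integrated-Harnack step above does not apply directly. I would handle this in two stages: first, use a tensor maximum principle to verify that the cone $\{A : (\lambda_1 + \cdots + \lambda_k)/H \geq -\epsilon\}$ is preserved along MCF, so that the hypothesis becomes pointwise rather than only an asymptotic $\liminf$; second, argue that even without the generalized Harnack, the pointwise sum bound together with $\limsup \max H < \infty$ is enough for the parabolic blow-down limit to be a closed mean convex self-shrinker, which the Huisken classification again forces to be the round sphere, yielding (2). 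This reduction from the $k > 1$ regime to a limit situation amenable to the Harnack-based analysis, rather than any individual implication in the cycle, is where I expect the bulk of the new technical work to lie.
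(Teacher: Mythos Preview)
Your plan diverges from the paper in a basic way: the proof of Theorem 1.3 does not use the generalized Harnack inequality at all. The paper's chain is $(4)\Rightarrow(5)\Rightarrow(3)\Rightarrow(1)$. The step $(4)\Rightarrow(5)$ is the elementary bound $\min_{M_t}H\le C/\sqrt{-t}$ from the weak maximum principle applied to $H$; the step $(5)\Rightarrow(3)$ comes from integrating $|\partial_t X|=H\le C/\sqrt{-t}$ to obtain $\operatorname{diam}(M_t)\le C\sqrt{-t}$. The core step $(3)\Rightarrow(1)$ is handled by Lemma 3.2 rather than by monotonicity and the self-shrinker classification: one first observes that the hypothesis for general $k$ implies the same bound with $k=d-1$, then notes that $t\mapsto\min(\lambda_1+\dots+\lambda_{d-1})/H$ is nondecreasing (tensor maximum principle for $aHg-A$), rescales by $\phi(t_j)$ at a sequence $(p_j,t_j)$ where this quotient approaches its infimum $\beta$, and takes a pointed limit. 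On the limit the strong maximum principle applied to $(1-\beta)Hg-A$, together with a Codazzi/splitting argument, forces the limit to be a round sphere; pulling back, the original flow is weakly convex for large negative $t$, and Theorem 1.2 finishes. The $k>1$ difficulty you anticipate is therefore a one-line reduction, not the crux.

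Your proposed cycle $(5)\Rightarrow(4)\Rightarrow(3)\Rightarrow(2)$ has concrete gaps. For $(5)\Rightarrow(4)$ via the integrated Harnack (Corollary 2.2) you must control $\Delta=\inf_\gamma\int|\gamma'|^2\,dt$, which requires an intrinsic diameter bound; absent (3) this is unavailable, so the step is circular. For $(4)\Rightarrow(3)$ the inequality $\max H\ge d/\operatorname{diam}$ yields only $\max H\cdot\operatorname{diam}\ge d$, the wrong direction. For $(3)\Rightarrow(2)$ via Huisken monotonicity you need the rescaled Gaussian area to stay bounded as $t\to-\infty$ in order to extract a self-shrinker limit; since the flow is merely immersed, the paper explicitly notes (in comparing with Langford) that this does not follow from bounded mean curvature alone. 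The paper's pointed-limit-plus-Lemma-3.2 argument sidesteps all three issues simultaneously and never invokes Theorem 1.1.
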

We first remark that in theorem 1.3, we additionally assume an upper bound on $H$ as $t \to -\infty$. This condition is actually superfluous in the convex case, since the upper bound on $H$ automatically follows from the classical Harnack inequality (see \cite{10.4310/jdg/1442364652} for details). We also note that while the scaling factor in condition (3) of theorem 1.2 is $\frac{1}{\sqrt{-t}}$, it is $\max_{M^d \times \{t\}}H$ in condition (3) of theorem 1.3. The latter is in general a stronger hypothesis because one always has
$$\frac{C}{\sqrt{-t}} \leq \max_{M^d \times \{t\}}H$$
for some fixed constant $C > 0$ for all sufficiently small $t$ by the weak maximum principle. Also, similar to the previous remark, the two conditions on the diameter are actually equivalent in the convex case due to the classical Harnack inequality (see \cite{10.4310/jdg/1442364652}, lemma 4.1). \\

There are other works generalizing \cite{10.4310/jdg/1442364652}, theorem 1.1 to a more general class of ancient solutions to mean curvature flow such as \cite{haslhofer2016ancient}, theorem 1.5 and \cite{langford2017general}, corollary 1.5, 1.6. In the work of Haslhofer and Hershkovits, they assume the ancient solution to be $\alpha$-noncollapsed for some $\alpha > 0$. In our case, the ancient solutions are not necessarily embedded, thus we work with solutions which are not necessarily noncollapsed. In the work of Langford, he assumes the ancient solution to be closed, have bounded rescaled volume and lower bound on $\frac{\lambda_1}{H}$. In our case, the curvature assumption is more general since one can take any $1 \leq k \leq d-1$. Also, bounded rescaled volume does not necessarily follow from bounded mean curvature in general since the proof of \cite{langford2017general}, lemma 5.2 requires the solution to be embedded. \\

We mention that the proof only uses classical weak and strong maximum principle for tensors, and does not require any additional heavy machinery. This fact motivates us to apply the argument used to prove theorem 1.3 to noncompact solutions.
\begin{theorem}
    Let $\{M_t^d\}_{t \in (-\infty, 0)}$ be complete, mean convex, ancient solution to mean curvature flow of $d \geq 2$ with $\limsup_{t \to -\infty}\sup_{M^d \times \{t\}}H < \infty$. Assume 
    $$\liminf_{t \to -\infty}\inf_{M^d \times \{t\}}\frac{\lambda_1 + .. + \lambda_{k}}{H} > -\infty $$
    for either $k = 1$ or $k = d-1$. We also impose a `flatness at low curvature region' condition, that is for every $\epsilon > 0$, there exists $\delta > 0$ so that whenever 
    $$\frac{H(p,t)}{\phi(t)} < \delta$$
    then
    $$\frac{\lambda_1 + .. + \lambda_{k}}{H}(p,t) > -\epsilon $$
    where $\phi(t) = \sup_{M^d \times (-\infty, t]}H$. \\\\
    Then the solution is weakly $k$ convex, i.e
   $$\liminf_{t \to -\infty}\inf_{M^d \times \{t\}}\frac{\lambda_1 + .. + \lambda_{k}}{H} \geq 0 $$
\end{theorem}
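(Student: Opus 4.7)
I argue by contradiction. Suppose
\[
\eta_0 := -\liminf_{t \to -\infty}\inf_{M^d \times \{t\}} Q_k > 0, \qquad Q_k := \tfrac{\lambda_1 + \cdots + \lambda_k}{H},
\]
and set $\phi^* := \lim_{t \to -\infty}\phi(t)$; this limit exists by monotonicity of $\phi$ and is positive (else $H \equiv 0$). The flatness hypothesis applied with $\epsilon = \eta_0/2$ furnishes $\delta_0 > 0$ such that $Q_k \leq -\eta_0/2$ forces $H \geq \delta_0 \phi(t)$. Picking $(p_n, t_n)$ with $t_n \to -\infty$ and $Q_k(p_n, t_n) \to -\eta_0$, I time-translate: $\tilde M^{(n)}_s := M_{s + t_n}$, basepointed at $p_n$. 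No rescaling is needed, since $H \leq \phi^*$ together with Shi-type derivative estimates gives uniform $C^\infty_{\mathrm{loc}}$ control on $A$ and its covariant derivatives. Hamilton's compactness theorem produces a subsequential $C^\infty_{\mathrm{loc}}$ limit $(\tilde M_\infty, \tilde p_\infty, 0)$, a pointed complete \emph{eternal} mean curvature flow with $\tilde H \leq \phi^*$ everywhere, $\tilde H(\tilde p_\infty, 0) \geq \delta_0\phi^*$, $\tilde Q_k(\tilde p_\infty, 0) = -\eta_0$, and $\tilde Q_k \geq -\eta_0$ on $\tilde M_\infty \times (-\infty, 0]$ (inherited from the original liminf).

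The decisive step is Hamilton's strong maximum principle for symmetric 2-tensors, applied to
\[
\tilde T := \tilde A + \eta_0 \tilde H \tilde g \quad (k = 1), \qquad \tilde S := (1+\eta_0)\tilde H \tilde g - \tilde A \quad (k = d-1).
\]
Using $\partial_t h_{ij} = \Delta h_{ij} - 2H h_{im} h_j{}^m + |A|^2 h_{ij}$, $\partial_t H = \Delta H + |A|^2 H$, and $\partial_t g_{ij} = -2H h_{ij}$, one computes
\[
\partial_t \tilde T_{ij} = \Delta \tilde T_{ij} + |\tilde A|^2 \tilde T_{ij} - 2\tilde H\bigl(h_{im} h_j{}^m + \eta_0 \tilde H h_{ij}\bigr),
\]
and a null eigenvector $v$ of $\tilde T$ is a principal direction with principal curvature $-\eta_0 \tilde H$, whence $(h_{im} h_j{}^m + \eta_0 \tilde H h_{ij}) v^i v^j = \eta_0^2 \tilde H^2 - \eta_0^2 \tilde H^2 = 0$; the reaction term thus vanishes at $v$, so Hamilton's null-eigenvector condition holds (the computation for $\tilde S$ is analogous). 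Since $\tilde T \geq 0$ on $\tilde M_\infty \times (-\infty, 0]$ and degenerates at $(\tilde p_\infty, 0)$, the strong maximum principle applied forward from $s = 0$ produces a smooth parallel null distribution of constant positive dimension on $\tilde M_\infty \times (0, \infty)$, giving $\tilde \lambda_1 \equiv -\eta_0 \tilde H$ (resp.\ $\tilde \lambda_d \equiv (1+\eta_0)\tilde H$) throughout.

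I then extract $\nabla \tilde H \equiv 0$ via Codazzi. For a local section $v$ of the parallel null distribution, differentiating $\tilde A v = -\eta_0 \tilde H v$ and using $\tilde A(\nabla_X v) = -\eta_0 \tilde H \nabla_X v$ (since $\nabla_X v$ also lies in the null distribution) gives $(\nabla_k h_{ij}) v^j = -\eta_0 (\nabla_k \tilde H) v_i$. The Codazzi-Mainardi symmetry $\nabla_k h_{ij} = \nabla_i h_{kj}$ renders the left-hand side symmetric in $(i,k)$, forcing $(\nabla_k \tilde H) v_i = (\nabla_i \tilde H) v_k$, i.e.\ $\nabla \tilde H \parallel v$ pointwise; tracing the same identity gives $(1+\eta_0)\nabla_v \tilde H = 0$, so $\nabla_v \tilde H = 0$, which combined with $\nabla \tilde H \parallel v$ forces $\nabla \tilde H \equiv 0$ on $\tilde M_\infty \times (0, \infty)$ (the $k = d-1$ case gives $\eta_0 \nabla_v \tilde H = 0$ by the identical calculation with $-\eta_0$ replaced by $1+\eta_0$). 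Consequently $\tilde H$ and $|\tilde A|^2$ are spatially constant there, and $\partial_s \tilde H = |\tilde A|^2 \tilde H$ becomes the ODE $h'(s) \geq \eta_0^2 h(s)^3$ (using $|\tilde A|^2 \geq \tilde \lambda_1^2$). By continuity of $\tilde H$ at $s = 0^+$, $h(0^+) \geq \delta_0 \phi^* > 0$, so this ODE drives $h$ to $+\infty$ in finite forward time, contradicting $h \leq \phi^*$.

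The main obstacle I anticipate is the strong maximum principle / Codazzi step: while the null-eigenvector computation is routine, turning Hamilton's abstract parallel null distribution into the explicit Codazzi identity above requires care, since when the null space has rank $> 1$ one generally cannot pick a pointwise parallel section, only one whose covariant derivatives remain in the distribution. The restriction to $k \in \{1, d-1\}$ enters precisely at the strong maximum principle step: only for these values does the pinching condition $Q_k \geq c$ correspond to an inequality among symmetric 2-tensors to which Hamilton's tensor maximum principle applies directly.
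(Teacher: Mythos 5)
Your overall skeleton (argue by contradiction, extract a pointed limit where the pinching ratio attains an interior spacetime minimum, then invoke a strong maximum principle to derive rigidity) is the same as the paper's. But you diverge in two places: you time-translate without rescaling and then attempt a hands-on SMP/Codazzi/ODE contradiction, whereas the paper rescales by $\phi(t_j)$ and then cites Lemma~3.2 (for $k=d-1$) or \cite{haslhofer2017mean} Prop.~A.1 (for $k=1$), each of which already packages the strong maximum principle plus the isometric-splitting/Gauss-equation argument into a ``the interior minimum must be nonnegative'' statement. Both divergences introduce genuine gaps.

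\textbf{Gap 1: the claim $\phi^* > 0$.} You assert that $\phi^* := \lim_{t\to-\infty}\phi(t)$ is positive ``else $H \equiv 0$''. This inference is false: $\phi^* = 0$ only says $H \to 0$ uniformly as $t \to -\infty$, not $H \equiv 0$, and indeed $\phi^*=0$ occurs for perfectly nontrivial complete, noncompact, mean convex ancient solutions (e.g.\ shrinking cylinders $S^j \times \mathbb{R}^{d-j}$, where $H \sim (-t)^{-1/2} \to 0$). In that regime your time-translated sequence degenerates: you only get $\tilde H(\tilde p_\infty,0)\ge \delta_0\phi^* = 0$, so you cannot rule out a flat limit and the whole argument collapses. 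The paper avoids this by rescaling the flows by $\phi(t_j)$, which normalizes the mean curvature of the rescaled flows to lie in $[0,1]$ and, via the flatness hypothesis, forces $H_j(p_j,0)/\phi(t_j)\ge \delta_0$ uniformly, so the rescaled limit is nonflat regardless of whether $\phi^*$ vanishes. To repair your argument you would need to reintroduce the $\phi(t_j)$-rescaling (which then also costs you the clean identity $\tilde H \le \phi^*$, though that is easily replaced by $\tilde H \le 1$).

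\textbf{Gap 2: direction of the strong maximum principle, and the ODE step.} You apply Hamilton's tensor SMP ``forward from $s=0$'' and claim a parallel null distribution on $(0,\infty)$. This is not what the SMP gives. In the scalar model, a nonnegative solution of a heat-type equation that vanishes at $(x_0,t_0)$ vanishes for all $t \le t_0$; the zero set propagates \emph{backward}, not forward. Correspondingly, the tensor SMP (e.g.\ \cite{chow2007ricci} Thm.~12.50, which the paper uses in Lemma~3.2) produces a parallel null distribution on a \emph{backward} interval $(-\delta,0)$ (and, after iteration, on $(-\infty,0)$), not on $(0,\infty)$. This matters because your closing contradiction depends critically on forward time: $\nabla\tilde H\equiv 0$ turns $\partial_s\tilde H = |\tilde A|^2\tilde H$ into $h' \ge \eta_0^2 h^3$, which blows up in finite \emph{forward} time but is perfectly well-behaved backward, giving $h(s) \lesssim |s|^{-1/2}\to 0$ as $s\to-\infty$ with no conflict with $h\le\phi^*$. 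So once the SMP is applied in the correct (backward) direction, your ODE step yields no contradiction at all. The correct way to close the argument---and what the paper's Lemma~3.2 actually does---is to combine the backward null distribution with the local isometric splitting and the Gauss equation: mixed sectional curvatures vanish, forcing the principal curvatures in $E^\perp$ to be zero while those in $E$ equal $-\eta_0\tilde H$ (for $k=1$), so $\tilde H = -l\eta_0\tilde H$ with $l=\mathrm{rk}\,E\ge 1$, which is impossible for $\tilde H>0$, $\eta_0>0$; for $k=d-1$ one instead concludes $\tilde A\ge 0$, contradicting $\tilde\lambda_1(\tilde p_\infty,0)=-\eta_0\tilde H<0$. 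Your Codazzi computation leading to $\nabla\tilde H\equiv 0$ is correct as far as it goes, but it is a detour: it does not by itself provide a contradiction, and the quantity you ultimately need is the one the Gauss-equation argument delivers directly.

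\textbf{Minor point.} Your compactness step says ``$H \le \phi^*$ together with Shi-type derivative estimates gives $C^\infty_{\mathrm{loc}}$ control.'' Bounded $H$ alone does not bound $|A|$; you need the pinching hypothesis $Q_k\ge -\eta_0$ (which gives $|A|\le CH$) to convert the $H$-bound into a full curvature bound before invoking Ecker--Huisken/Shi estimates. The paper states this explicitly.
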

\subsection*{Acknowledgment}
The author would like to thank his advisor Prof. Natasa Sesum for her various suggestions and comments to improve the results of this paper.
\section{Generalized Harnack inequality}
\begin{proof}[Proof of theorem 1.1]
 For simplicity, we define
$$\delta = \frac{1}{1 + \epsilon d}, \ \ f(t) = \frac{1}{2t} + \frac{3}{2}\epsilon \delta \phi^2(\hat{T}), \ \ g(t) = \frac{1}{2t} + \frac{3\epsilon \delta}{4}\phi^2(\hat{T})$$
We use indices $a,b,c$, and $\nabla_t$ to denote covariant time derivative as in \cite{hamilton1995harnack}. 
We show that for any $0 < t \leq \hat{T} < T$, $\epsilon > \epsilon_0$
\begin{align*}
Z_{\epsilon, \hat{T}}(V) &= (A + \epsilon Hg)(V,V) + \frac{2}{1 + \epsilon d}\nabla H\cdot V + \frac{1}{(1 + \epsilon d)^2}\nabla_tH+ \frac{H}{2t}   + \frac{3\epsilon}{4(1 + \epsilon d)}\phi(\hat{T})^2H+ \frac{\epsilon}{(1 + \epsilon d)^2}H^3 \\ &= (A + \epsilon Hg)(V, V) + 2\delta \nabla H\cdot V + \delta^2\nabla_tH + g(t)H + \epsilon \delta^2 H^3 \geq 0    
\end{align*}
for all $V \in TM^d$, $0 < t \leq \hat{T} < T$. 
Then theorem 1.1 follows by first letting $\epsilon \to \epsilon_0$, then
$t = \hat{T} \in (0, T)$. 
Since $\{M^d_t\}_{t \in [0, T)}$ is a mean convex solution to mean curvature flow, by the strong maximum principle either $H > 0$ for all $t \in (0, T)$ or $H = 0$. The theorem trivially holds when $H = 0$, thus WLOG we assume $H > 0$ for $t > 0$. Then for any $\epsilon > \epsilon_0$, $$A + \epsilon Hg > 0 $$
i.e it is invertible. This ensures that at each $(p,t) \in M^d \times (0, \hat{T}]$, there exists a unique $V(p,t) \in T_pM^d$ which minimizes
$$T_pM^d \ni V \to Z_{\epsilon, \hat{T}}(V)$$
and is characterized by
$$A_{ab}V_b + \epsilon HV_a + \delta \nabla_aH = 0$$
which is the 1st variation formula w.r.t $V$. \\

We claim that at each $(p_0,t_0) \in M^d \times (0, \hat{T}]$, we can find a local extension of $V(p_0,t_0)$ in a small backward parabolic cylinder centered at $(p_0,t_0)$ denoted also by $V$ so that
$$(\nabla_t - \Delta)|_{(p_0,t_0)}Z_{\epsilon, \hat{T}}(V) \geq (|A|^2 - \frac{4f}{\delta})Z_{\epsilon, \hat{T}}(V)$$
For each $(p_0,t_0) \in M^d \times (0, \hat{T}]$, we let $(U, \{x_a\})$ to be a geodesic normal coordinates w.r.t $g(t_0)$ centered at $p_0$ so that the coordinate vectors at $p_0$ are the eigenvectors of the second fundamental form i.e at $(p_0, t_0)$,
$$A_{ab} = \lambda_a\delta_{ab}$$
We define our local orthonormal frame $e_a(p,t)$ by first letting $e_a(p_0, t_0) = \frac{\partial}{\partial x^a}|_{p_0}$, then extending it to $U \times \{t_0\}$ by parallel transport (w.r.t $g(t_0)$) along radial geodesics emanating from $p_0$ (w.r.t $g(t_0)$), then extending it to $U \times (t_0 - \eta, t_0]$ by $\nabla_t$-parallel transport for some small $\eta > 0$. We then define our local extension to be
\begin{align*}
    U \times (t_0 - \eta, t_0] \ni (p,t) \to V(p,t) =& \ (V_a(p_0, t_0) + (\delta H(p_0, t_0)\lambda_a + f(t_0))x_a(p) \\ &+ (-\frac{2f(t_0)}{\delta}V_a(p_0, t_0) - \delta \lambda_a\nabla_aH(p_0, t_0))(t - t_0))e_a(p,t)
\end{align*}
where $x_a(p)$ is the $a$th coordinate of $p$. Then $V$ is indeed a local extension of $V(p_0,t_0)$ and we have
\begin{align*}
   (\nabla_t - \Delta)|_{(p_0, t_0)}V_a &= -\frac{2f(t_0)}{\delta}V_a(p_0, t_0) - \delta \lambda_a\nabla_aH(p_0, t_0) = -\delta A_{ab}\nabla_bH - \frac{2f(t_0)}{\delta}V_a \\ \nabla_aV_b|_{(p_0, t_0)} &= \delta H(p_0, t_0)\lambda_a\delta_{ab} + f(t_0)\delta_{ab} = \delta HA_{ab} + f g_{ab}
\end{align*}
We then calculate $(\nabla_t - \Delta)|_{(p_0,t_0)}Z_{\epsilon, \hat{T}}(V)$ using above extension. \\

We calculate the evolution equation using computations given in \cite{hamilton1995harnack}. Note that all computations below are done at $(p_0, t_0)$ w.r.t our choice of local extension of $V(p_0, t_0)$.
\begin{align*}
    (\nabla_t - \Delta)Z_{\epsilon, \hat{T}}(V) =& |A|^2Z_{\epsilon, \hat{T}}(V) + (2A_{ab}V_b + 2\epsilon Hg_{ab}V_b + 2\delta\nabla_aH)(\nabla_t - \Delta)V_a -2(A_{ab} + \epsilon Hg_{ab})\nabla_cV_a\nabla_cV_b \\ &+(-4\nabla_cA_{ab}V_b - 4\epsilon\nabla_cHg_{ab}V_b - 4\delta\nabla_tA_{ac} + 4\delta HA_{ab}A_{bc})\nabla_cV_a + 4\delta H\nabla_aA_{bc}A_{bc}V_a 
 \\ &+ 2\delta A_{ab}A_{bc}\nabla_cHV_a + 4\delta^2HA_{ab}\nabla_tA_{ab} +2\delta^2A_{ab}\nabla_aH\nabla_bH - 2\delta^2H^2A_{ab}A_{bc}A_{ca} \\ &+ 2\epsilon\delta^2|A|^2H^3 + g'(t)H - 6\epsilon\delta^2H|\nabla H|^2
\end{align*}
We compute each terms separately according to our choice of local extension.
\begin{align*}
    (2A_{ab} + 2\epsilon Hg_{ab})(\nabla_t - \Delta)V_aV_b =& -2\delta A_{ab}A_{ac}V_b\nabla_cH - 2\epsilon \delta HA_{ab}V_a\nabla_bH - 2\delta^2A_{ab}\nabla_aH\nabla_bH \\ &- \frac{4f}{\delta}(A_{ab}+ \epsilon Hg_{ab})V_aV_b - 4f\nabla_aHV_a 
\end{align*}
\begin{align*}
   -2(A_{ab} + \epsilon Hg_{ab})\nabla_cV_a\nabla_cV_b &= -2(A_{ab} + \epsilon Hg_{ab})(\delta HA_{ac} + fg_{ac})(\delta H A_{bc} + fg_{bc}) \\ &=-2(A_{ab} + \epsilon Hg_{ab})(\delta^2H^2A_{ac}A_{bc} + 2f\delta HA_{ab} + f^2g_{ab}) \\ &= -2\delta^2H^2A_{ab}A_{bc}A_{ca} - 4f\delta H|A|^2 - 2f^2H-2\epsilon\delta^2H^3|A|^2-4f\epsilon\delta H^3-2\epsilon df^2H
\end{align*}
\begin{align*}
  &(-4\nabla_cA_{ab}V_b - 4\epsilon\nabla_cHg_{ab}V_b - 4\delta\nabla_tA_{ac} + 4\delta HA_{ab}A_{bc})\nabla_cV_a \\ = &(-4\nabla_bA_{ac}V_b - 4\epsilon\nabla_cHg_{ab}V_b - 4\delta\nabla_tA_{ac} + 4\delta HA_{ab}A_{bc})(\delta HA_{ac} + fg_{ac}) \\ =& -4\delta HA_{ac}\nabla_bA_{ac}V_b - 4f\nabla_aHV_a-4\epsilon\delta HA_{ab}V_b\nabla_aH - 4\epsilon f\nabla_aHV_a \\ &-4\delta^2H\nabla_tA_{ab}A_{ab} - 4\delta f\nabla_tH + 4\delta^2H^2A_{ab}A_{bc}A_{ca} + 4f\delta H|A|^2
\end{align*}
Combining all the terms, we obtain
\begin{align*}
   (\nabla_t - \Delta)Z_{\epsilon, \hat{T}}(V) =& |A|^2Z_{\epsilon, \hat{T}}(V) - \frac{4f}{\delta}(A_{ab} + \epsilon Hg_{ab})V_aV_b - 8f\nabla_aHV_a - 6\epsilon \delta HA_{ab}V_b\nabla_aH - 4\epsilon f\nabla_aHV_a \\ &- 4\delta f \nabla_tH + g'(t)H -6\epsilon \delta^2H|\nabla H|^2 - 2f^2H - 4f\epsilon\delta H^3 - 2\epsilon df^2H 
\end{align*} 
Since $V(p_0, t_0)$ is characterized by the 1st variation formula
$$A_{ab}V_b + \epsilon HV_a + \delta \nabla_aH = 0$$
we have
$$- 6\epsilon \delta HA_{ab}V_b\nabla_aH = 6\epsilon\delta H(\epsilon HV_a + \delta \nabla_aH)\nabla_aH = 6\epsilon^2\delta H^2\nabla_aH V_a + 6\epsilon \delta^2H|\nabla H|^2$$
This gives us 
\begin{align*}
(\nabla_t - \Delta)Z_{\epsilon, \hat{T}}(V) =& |A|^2Z_{\epsilon, \hat{T}}(V) - \frac{4f}{\delta}[(A_{ab} + \epsilon Hg_{ab})V_aV_b +2\delta \nabla_aHV_a + \delta^2\nabla_tH] \\ &+ (6\epsilon^2\delta H^2 - 4\epsilon f)\nabla_aHV_a + g'(t)H - 2f^2H - 4f\epsilon \delta H^3 - 2\epsilon df^2H \\ =& |A|^2Z_{\epsilon, \hat{T}}(V) - \frac{4f}{\delta}[Z_{\epsilon, \hat{T}}(V) - g(t)H - \epsilon\delta^2H^3] \\ &+ (6\epsilon^2\delta H^2 - 4\epsilon f)\nabla_aHV_a+ g'(t)H - 2f^2H - 4f\epsilon \delta H^3 - 2\epsilon df^2H\\ =& (|A|^2 - \frac{4f}{\delta})Z_{\epsilon, \hat{T}}(V) + (6\epsilon^2\delta H^2 - 4\epsilon f)\nabla_aHV_a + (\frac{4fg}{\delta} -2f^2 - 2\epsilon df^2 + g'(t))H
\end{align*}
By the 1st variation formula together with the curvature assumption
$$\nabla_aHV_a = -\frac{1}{\delta}(A_{ab} + \epsilon Hg_{ab})V_aV_b \leq 0 \ \ \ \text{ and } \ \ \  H > 0$$
Therefore, we obtain the desired differential inequality if 
$$6\epsilon^2\delta H^2 - 4\epsilon f \leq 0$$
and
$$\frac{4fg}{\delta} - 2f^2 - 2\epsilon df^2 + g'(t) \geq 0$$
Recall
$$\delta = \frac{1}{1 + \epsilon d}, \ \ f(t) = \frac{1}{2t} + \frac{3}{2}\epsilon \delta \phi^2(\hat{T}), \ \ g(t) = \frac{1}{2t} + \frac{3\epsilon \delta}{4}\phi^2(\hat{T})$$
and $(p_0, t_0) \in M^d \times (0, \hat{T}]$ and $\phi(\hat{T}) = \sup_{M^d \times (0, \hat{T}]}H$. One can directly check that above two inequalities indeed hold, thus proving the desired differential inequality. \\

One can then use parabolic maximum principle as in \cite{hamilton1995harnack} to conclude that $$Z_{\epsilon, \hat{T}}(V) \geq 0$$
for $V \in TM^d$, $t\in (0, \hat{T}]$, hence prove theorem 1.1.  
\end{proof}

In the case of ancient solutions, the term $\frac{H}{2t}$ can be dropped.
\begin{corollary}[Harnack inequality for ancient solutions]
Let $\{M_t^d\}_{t \in (-\infty, 0)}$ be an ancient solution to mean curvature flow which is either closed or complete with bounded curvature such that $$A + \epsilon_0 Hg \geq 0 \text{ in }M^d \times (-\infty, 0)$$
    for some $\epsilon_0 \geq 0$ and $$\phi(t) = \sup_{M^d \times (-\infty, t]}H < \infty$$ 
    for all $t \in (-\infty, 0)$. Then
     $$Z(V) = (A + \epsilon_0Hg)(V,V) + \frac{2}{1 + \epsilon_0d}\nabla H\cdot V + \frac{1}{(1 + \epsilon_0d)^2}\nabla_tH + \frac{\epsilon_0}{(1 + \epsilon_0d)^2}H^3 + \frac{3\epsilon_0}{4(1 + \epsilon_0d)}\phi^2(t)H \geq 0$$
    for all $V \in TM^d$, $t \in (-\infty, 0)$. 
\end{corollary}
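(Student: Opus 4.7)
The plan is to reduce Corollary 2.1 to Theorem 1.1 by a time-translation trick. Fix $t \in (-\infty, 0)$, $p \in M^d$, and $V \in T_pM^d$. For each $t_0 < t$, I would consider the translated family $\tilde M_s^d := M_{s+t_0}^d$ for $s \in [0, -t_0)$. This is still a mean convex solution to mean curvature flow that is either closed or complete with bounded curvature, still satisfies $A + \epsilon_0 H g \geq 0$, and the corresponding supremum $\tilde\phi(s) := \sup_{M^d \times (0,s]} \tilde H = \sup_{M^d \times (t_0, s+t_0]} H$ is finite because it is bounded above by the ancient supremum $\phi(s+t_0) \leq \phi(t)$. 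Therefore Theorem 1.1 applies to $\{\tilde M_s^d\}$ on $[0,-t_0)$.

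Applying Theorem 1.1 to the translated flow at shifted time $s = t - t_0$, and reading the curvature quantities back at the original time $t$, one obtains
\begin{align*}
0 \leq\ & (A + \epsilon_0 H g)(V,V) + \tfrac{2}{1+\epsilon_0 d}\nabla H \cdot V + \tfrac{1}{(1+\epsilon_0 d)^2}\nabla_t H \\
& + \tfrac{\epsilon_0}{(1+\epsilon_0 d)^2} H^3 + \tfrac{3\epsilon_0}{4(1+\epsilon_0 d)}\tilde\phi^2(t-t_0)\, H + \tfrac{H}{2(t-t_0)}.
\end{align*}
The key observation is that the coefficient of $\phi^2 H$ in the Harnack expression, $\tfrac{3\epsilon_0}{4(1+\epsilon_0 d)}H$, is nonnegative (trivially so if $H \equiv 0$, and by the strong maximum principle one may assume $H > 0$ otherwise). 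Since $\tilde\phi(t - t_0) \leq \phi(t)$, replacing $\tilde\phi$ by $\phi(t)$ only makes the right-hand side larger, preserving the inequality.

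Now let $t_0 \to -\infty$. Under the standing assumption that the solution has finite $\phi(t)$ for each $t$, all of the other terms are independent of $t_0$, and the only $t_0$-dependent term, $\frac{H}{2(t-t_0)}$, tends to $0$. Passing to the limit yields exactly the statement of Corollary 2.1.

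I do not expect any significant obstacle here: the argument is essentially a compactness-free limiting argument. The only subtle point is the direction of monotonicity in $\phi$ — one must check that increasing $\phi$ preserves nonnegativity of $Z$, which is immediate from the nonnegativity of the coefficient of the $\phi^2 H$ term. No further machinery beyond Theorem 1.1 itself is required.
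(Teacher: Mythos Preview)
Your proposal is correct and follows essentially the same approach as the paper: restrict the ancient flow to $[-\alpha,0)$ (equivalently, your time-shift by $t_0=-\alpha$), apply Theorem~1.1, use $\phi(t)\ge \sup_{M^d\times(-\alpha,t]}H$ together with the nonnegativity of the $\phi^2H$ coefficient, and let $\alpha\to\infty$ to kill the $\frac{H}{2(t+\alpha)}$ term. Your write-up even makes explicit the monotonicity-in-$\phi$ step that the paper leaves implicit.
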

\begin{proof}
 For each fixed $V \in TM^d$, $t \in (-\infty, 0)$, let $-\alpha < t < 0$ and consider $\{M_s^d\}_{s \in [-\alpha, 0)}$. Since $\phi(t) \geq \sup_{M^d \times (-\alpha, t]} H$, theorem 1.1 implies
$$Z(V) = (A + \epsilon_0Hg)(V,V) + \frac{2}{1 + \epsilon_0d}\nabla H\cdot V + \frac{H}{2(t + \alpha)} + \frac{1}{(1 + \epsilon_0 d)^2}\nabla_tH + \frac{\epsilon_0}{(1 + \epsilon_0d)^2}H^3 + \frac{3\epsilon_0}{4(1 + \epsilon_0d)}\phi(t)^2H \geq 0$$
for all $V \in TM^d$, $t \in (-\alpha, 0)$. Thus letting $\alpha \to \infty$, we obtain corollary 2.1.    \end{proof} 

As in \cite{hamilton1995harnack}, we can integrate the Harnack quantity along spacetime curves to compare $H$ at different points in spacetime.
\begin{corollary}
Let $\{M^d_t\}_{t \in [0, T)}$ be as in theorem 1.1. Then for any $p_1, p_2 \in M^d$ and $0 < t_1 < t_2 < T$
$$\frac{H(p_2, t_2)}{H(p_1, t_1)} \geq (\frac{t_1}{t_2})^{\frac{(1 + \epsilon_0d)^2}{2}}exp(-\frac{1 + \epsilon_0d}{4}\Delta - \epsilon_0(1 + \frac{3 + 3\epsilon_0d}{4})\phi^2(t_2)(t_2 - t_1))$$
where $\Delta = \inf_{\gamma}\int_{t_1}^{t_2}|\gamma'(t)|^2_{g(t)}dt$. The infimum is taken among all smooth curves $\gamma : [t_1, t_2] \to M^d$ with $\gamma(t_1) = p_1$ and $\gamma(t_2) = p_2$.
\end{corollary}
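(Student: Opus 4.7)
The plan is to integrate the pointwise Harnack inequality of Theorem~1.1 along a spacetime curve $(\gamma(t), t)$, $t \in [t_1, t_2]$, with $\gamma(t_1) = p_1$ and $\gamma(t_2) = p_2$. Along such a curve one has $\frac{d}{dt}\log H(\gamma(t),t) = \frac{\nabla_t H}{H} + \frac{\nabla H \cdot \gamma'(t)}{H}$, so the objective is to convert $Z(V)\geq 0$ into a pointwise lower bound for this logarithmic derivative.

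Writing $\delta_0 = \frac{1}{1+\epsilon_0 d}$ for brevity, the coefficients of $\nabla H \cdot V$ and $\nabla_t H$ in $Z(V)$ are $2\delta_0$ and $\delta_0^2$ respectively. The natural substitution that makes the two ``kinetic'' terms align is therefore $V = \frac{\delta_0}{2}\gamma'(t)$: with this choice $2\delta_0\,\nabla H\cdot V = \delta_0^2\,\nabla H\cdot \gamma'(t)$, so after dividing through by $\delta_0^2 H$ the sum $\frac{\nabla_t H}{H} + \frac{\nabla H\cdot \gamma'(t)}{H}$ appears with the correct unit coefficient.

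The key estimate I would use is an upper bound on the tensor term. The pinching $\lambda_1 \geq -\epsilon_0 H$ together with $\lambda_1 + \dots + \lambda_d = H$ forces $\lambda_d \leq H - (d-1)\lambda_1 \leq (1+(d-1)\epsilon_0)H$, so the largest eigenvalue of $A + \epsilon_0 Hg$ is at most $(1+\epsilon_0 d)H = H/\delta_0$. Consequently $(A + \epsilon_0 Hg)(V,V) \leq \frac{\delta_0 H}{4}|\gamma'(t)|^2$, and feeding this upper bound into $Z(V)\geq 0$, isolating $\delta_0^2\bigl(\nabla_t H + \nabla H \cdot \gamma'(t)\bigr)$, and dividing by $\delta_0^2 H$ should yield
$$\frac{d}{dt}\log H(\gamma(t),t) \geq -\frac{1+\epsilon_0 d}{4}|\gamma'(t)|^2 - \frac{(1+\epsilon_0 d)^2}{2t} - \epsilon_0 H^2(\gamma(t),t) - \frac{3\epsilon_0(1+\epsilon_0 d)}{4}\phi^2(t).$$

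Integrating from $t_1$ to $t_2$ produces the factor $(t_1/t_2)^{(1+\epsilon_0 d)^2/2}$ directly from the $1/(2t)$ term. The remaining two $\epsilon_0$-contributions are handled by noting that $H(\gamma(t),t)\leq \phi(t)\leq \phi(t_2)$ and that $\phi$ is nondecreasing, so both $\int_{t_1}^{t_2} H^2\,dt$ and $\int_{t_1}^{t_2}\phi^2(t)\,dt$ are at most $\phi^2(t_2)(t_2-t_1)$. The coefficients $1$ and $\frac{3(1+\epsilon_0 d)}{4} = \frac{3+3\epsilon_0 d}{4}$ then combine into the bracketed factor of the statement. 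Exponentiating and taking the infimum over admissible curves $\gamma$ yields the claim. The only mildly subtle point is pinning down the correct rescaling $V = \frac{\delta_0}{2}\gamma'(t)$ together with the eigenvalue bound $\lambda_d + \epsilon_0 H \leq H/\delta_0$; once these two ingredients are in place, the remainder is routine integration.
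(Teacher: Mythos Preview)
Your proposal is correct and follows essentially the same route as the paper: both take $V=\tfrac{1}{2(1+\epsilon_0 d)}\gamma'(t)$, divide by $H$, and integrate, bounding $H^2$ and $\phi^2(t)$ by $\phi^2(t_2)$. The only difference is that you spell out the eigenvalue estimate $\lambda_d+\epsilon_0 H\le (1+\epsilon_0 d)H$ explicitly, whereas the paper silently absorbs the $(A+\epsilon_0 Hg)(V,V)/H$ term into $\tfrac{1}{4(1+\epsilon_0 d)}|\gamma'(t)|^2$ without comment.
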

\begin{proof}
    Let $\gamma : [t_1, t_2] \to M^d$ be any smooth curve connecting $p_1$ and $p_2$. Dividing the Harnack inequality by $H$ and setting $V = \frac{1}{2(1 + \epsilon_0 d)}\gamma'(t)$, we have
$$\frac{1}{(1 + \epsilon_0d)^2}\frac{\nabla H}{H}\cdot \gamma'(t) + \frac{1}{(1 + \epsilon_0d)^2}\frac{\nabla_tH}{H} + \frac{1}{4(1 + \epsilon_0d)}|\gamma'(t)|^2_{g(t)} + \epsilon_0(\frac{1}{(1 + \epsilon_0d)^2} + \frac{3}{4(1 + \epsilon_0d)})\phi^2(t_2) + \frac{1}{2t} \geq 0$$
Therefore
$$\frac{d}{dt}\ln{H(\gamma(t), t)} = \frac{\nabla H}{H}\cdot \gamma'(t) + \frac{\nabla_tH}{H} \geq -\frac{1 + \epsilon_0d}{4}|\gamma'(t)|^2_{g(t)} -\epsilon_0(1 + \frac{3 + 3\epsilon_0d}{4})\phi^2(t_2) - \frac{(1 + \epsilon_0d)^2}{2t}$$
Integrating in $t$ and taking infimum among all possible curves $\gamma$ gives
$$\ln{\frac{H(p_2,t_2)}{H(p_1, t_1)}} \geq -\frac{1 + \epsilon_0d}{4}\Delta - \epsilon_0(1 + \frac{3 + 3\epsilon_0d}{4})\phi^2(t_2)(t_2 - t_1) + \frac{(1 + \epsilon_0d)^2}{2}\ln{\frac{t_1}{t_2}}$$
thus proving corollary 2.2.
\end{proof}
\section{Ancient solutions}
To prove theorem 1.3, we need two lemmas. The first lemma allows us to replace condition (3) of theorem 1.3 with an equivalent statement which is easier to deal with when doing rescaling argument. 
\begin{lemma}
    Let $\{M_t^d\}_{t \in (-\infty, 0)}$ be as in theorem 1.3. Let $\phi(t) = \sup_{M^d \times (-\infty, t]}H < \infty$. Then
    $$\limsup_{t \to -\infty}\phi(t)\textit{{\normalfont diam}}(M_t^d) < \infty \iff \limsup_{t \to -\infty}\max_{M^d \times \{t\}}H\textit{{\normalfont diam}}(M_t^d) < \infty $$
\end{lemma}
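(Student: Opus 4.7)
The forward implication ``$\phi(t)\,\mathrm{diam}(M_t^d)$ bounded $\Rightarrow \max_{M^d\times\{t\}}H\cdot\mathrm{diam}(M_t^d)$ bounded'' is automatic, since $\phi(t) \geq \max_{M^d \times \{t\}} H$ by the very definition of $\phi$. So the content of the lemma is the reverse direction, and my plan is to combine the hypothesis with one geometric input: monotonicity of the extrinsic diameter under the flow.

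The required input is that $\mathrm{diam}(M_t^d)$ is nonincreasing in $t$ for a closed (immersed) solution of mean curvature flow in $\mathbb{R}^{d+1}$. This is a standard consequence of the avoidance principle applied to static hyperplanes: the image of $M_t^d$ in $\mathbb{R}^{d+1}$ stays on one side of any hyperplane it was initially disjoint from, hence remains inside the convex hull of the image of $M_s^d$ for every $t \geq s$, so $\mathrm{diam}(M_t^d) \leq \mathrm{diam}(M_s^d)$. Embeddedness plays no role, because we only compare the image of the flow to a static hyperplane.

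Granting this monotonicity, the rest collapses to a one-line computation. Assume $\limsup_{t \to -\infty} \max_{M^d \times \{t\}} H \cdot \mathrm{diam}(M_t^d) = L < \infty$ and pick $T_0 < 0$ so that $\max_{M^d \times \{s\}} H \cdot \mathrm{diam}(M_s^d) \leq L+1$ for every $s \leq T_0$. For any $t \leq T_0$ and any $s \leq t$, diameter monotonicity gives $\mathrm{diam}(M_s^d) \geq \mathrm{diam}(M_t^d)$, whence
$$\max_{M^d \times \{s\}} H \;\leq\; \frac{L+1}{\mathrm{diam}(M_s^d)} \;\leq\; \frac{L+1}{\mathrm{diam}(M_t^d)}.$$
Taking the supremum over $s \in (-\infty, t]$ yields $\phi(t) \leq (L+1)/\mathrm{diam}(M_t^d)$, equivalently $\phi(t)\,\mathrm{diam}(M_t^d) \leq L+1$, and letting $t \to -\infty$ finishes the reverse implication.

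I do not anticipate a serious obstacle: the diameter monotonicity is classical and the rest is bookkeeping. The point of the lemma is a technical one, namely that it lets one replace the geometrically natural quantity $\max_{M^d\times\{t\}}H\cdot\mathrm{diam}(M_t^d)$ by the monotone-in-$t$ quantity $\phi(t)\,\mathrm{diam}(M_t^d)$, which is the form best suited to the parabolic rescaling arguments used in the proof of Theorem 1.3.
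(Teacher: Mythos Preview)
Your proposal is correct and follows essentially the same route as the paper: the forward implication is trivial from $\phi(t)\ge \max_{M^d\times\{t\}}H$, and the reverse implication is obtained by combining the hypothesis with the monotonicity of the extrinsic diameter, followed by the same elementary estimate bounding $\phi(t)\,\mathrm{diam}(M_t^d)$. The only cosmetic difference is in how the diameter monotonicity is justified: you invoke the convex-hull/hyperplane avoidance argument, while the paper differentiates $\mathrm{diam}(M_t^d)$ directly and uses nonpositivity of the Laplacian of $|X_t(\cdot)-X_t(\cdot)|$ at the maximizing pair to get $\frac{d}{dt}\mathrm{diam}(M_t^d)+\frac{2d}{\mathrm{diam}(M_t^d)}\le 0$.
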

\begin{proof}
By definition of $\phi$,
$$\limsup_{t \to -\infty}\max_{M^d \times \{t\}}H \textit{{\normalfont diam}}(M_t^d) \leq \limsup_{t \to -\infty}\phi(t)\textit{{\normalfont diam}}(M_t^d)$$
hence we trivially have $\implies$ direction. \\

To prove the reverse direction, note that $t \to \textit{{\normalfont diam}}(M_t^d) = \max_{x,y \in M^d}|X_t(x) - X_t(y)|$ is nonincreasing in $t$. We give a brief sketch of why this is true. Since we are working with smooth solutions to the mean curvature flow, for each $t \in (-\infty, 0)$
$$\textit{{\normalfont diam}}(M_t^d) = \max_{x,y \in M^d}|X_t(x) - X_t(y)| > 0$$
This ensures that $$(-\infty, 0)\ni t \to \textit{{\normalfont diam}}(M_t^d)$$
is locally Lipchitz in $t$, and thus is differentiable for almost every $t$, with
$$\frac{d}{dt}\textit{{\normalfont diam}}(M_t^d) = \frac{\partial}{\partial t}|X_t(p) - X_t(q)| = \langle \frac{X_t(p) - X_t(q)}{\textit{{\normalfont diam}}(M_t^d)}, \vec{H}(p,t) - \vec{H}(q,t) \rangle$$
where $(p,q) \in M^d \times M^d$ are the two distinct points which achieves the diameter, $\vec{H}$ is the mean curvature vector and $\langle \cdot, \cdot \rangle$ is the standard inner product in $\mathbb{R}^{d+1}$. On the other hand, since $$|X_t(p) - X_t(q)| = \max_{(x,y) \in M^d \times M^d}|X_t(x) - X_t(y)|$$
the spatial Laplacian of $|X_t(\cdot) - X_t(\cdot)|$ at $(p,q)$ is nonpositive. This implies that
$$ 0 \geq \langle \frac{X_t(p) - X_t(q)}{\textit{{\normalfont diam}}(M_t^d)}, \vec{H}(p,t) - \vec{H}(q,t) \rangle + \frac{2d}{\textit{{\normalfont diam}}(M_t^d)} = \frac{d}{dt}\textit{{\normalfont diam}}(M_t^d) + \frac{2d}{\textit{{\normalfont diam}}(M_t^d)}$$
which proves that $\textit{{\normalfont diam}}(M_t^d)$ is nonincreasing in $t$. \\

Assume 
$$\limsup_{t \to -\infty}\max_{M^d \times \{t\}}H\textit{{\normalfont diam}}(M_t^d) \leq C < \infty$$
Then for each small enough $t$, we have
$$\max_{M^d \times \{s\}}H\textit{{\normalfont diam}}(M_s^d) < C+1$$
for all $s \leq t$.
For any $\epsilon > 0$, we can find $s \in (-\infty, t]$ so that $$\phi(t) - \epsilon < \max_{M^d \times \{s\}}H$$
Since $\textit{{\normalfont diam}}(M_t^d) \leq \textit{{\normalfont diam}}(M_s^d)$, we have
$$(\phi(t) - \epsilon)\textit{{\normalfont diam}}(M_t^d) \leq \max_{M^d \times \{s\}}H\textit{{\normalfont diam}}(M_s^d) \leq C+1$$
Thus letting $\epsilon \to 0$, we have
$$\phi(t)\textit{{\normalfont diam}}(M_t^d) \leq C+1$$
for all sufficiently small $t$, thus proving that
$$\limsup_{t \to -\infty}\phi(t)\textit{{\normalfont diam}}(M^d_t) \leq C+1 < \infty$$
thus proving the lemma.    
\end{proof}

The 2nd lemma essentially follows from \cite{haslhofer2017mean}, proposition A.1 and \cite{chow2020extrinsic}, theorem 12.24. However, we include the proof here for the convenience of the reader, and to explain why our extra conclusion when additionally assuming $A > 0$ follows from their results.
\begin{lemma}
Let $X : M^d \times (-T, 0] \to \mathbb{R}^{d+1}$ be a smooth, complete, strictly mean convex solution to mean curvature flow of dimension $d \geq 2$ defined for some nonpositive time $t \in (-T, 0]$ for some $0 < T \leq \infty$. Suppose there exists $p_0 \in M^d$ so that
$$\frac{\lambda_1 + .. + \lambda_{d-1}}{H}$$
attains a minimum at $(p_0,0)$. Then there exists $\delta \in (0, T)$ so that 
$$A \geq 0$$
for all $M^d \times (-\delta, 0]$. In particular, the minimum of $\frac{\lambda_1 + .. + \lambda_{d-1}}{H}$ is nonnegative. Moreover, if $A > 0$ at $t = 0$, then $$A = \frac{1}{d}Hg$$
for all $M^d \times (-\delta, 0]$, 
hence $M^d$ is closed and $X_0 = X(\cdot, 0) : M^d \to \mathbb{R}^{d+1}$ is an isometric embedding with its image being a round sphere.
\end{lemma}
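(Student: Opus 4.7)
The hypothesis rewrites as $\lambda_d/H = 1 - (\lambda_1+\cdots+\lambda_{d-1})/H$ attaining its spacetime maximum $M_0$ at the interior point $(p_0, 0)$, so my strategy is to first force $\lambda_d \equiv M_0 H$ via a scalar strong maximum principle, then upgrade this to tensor-level rigidity via Hamilton's strong maximum principle for symmetric $2$-tensors, and finally convert the resulting parallel distribution into sharp constraints on the principal curvatures via the Gauss equation.

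For the scalar step, $\lambda_d$ is a convex, Lipschitz function of the Weingarten operator, so under MCF it satisfies the viscosity differential inequality $(\partial_t - \Delta)\lambda_d \leq |A|^2 \lambda_d$ (formulated in viscosity sense to handle eigenvalue crossings). Combined with $(\partial_t - \Delta) H = |A|^2 H$, the ratio $\mu := \lambda_d/H$ is a viscosity subsolution of a linear parabolic equation with no zero-order term, so the strong maximum principle (essentially \cite{chow2020extrinsic}, theorem 12.24), together with attainment of the interior spacetime maximum at $(p_0, 0)$, gives $\mu \equiv M_0$, i.e., $\lambda_d \equiv M_0 H$, on $M^d \times (-\delta, 0]$ for some $\delta \in (0, T)$.

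Next I consider the symmetric nonnegative tensor $T_{ij} := M_0 H g_{ij} - A_{ij}$, whose kernel at each point equals the $\lambda_d$-eigenspace of $A$ (nonempty, of rank $\geq 1$, since $\lambda_d \geq H/d > 0$ forces $M_0 \geq 1/d > 0$). Using the MCF evolutions of $A$ and $H$ one verifies that $T$ satisfies a parabolic equation whose reaction preserves the nonnegative cone, so Hamilton's strong maximum principle for symmetric $2$-tensors (\cite{haslhofer2017mean}, proposition A.1) applies: $\ker T \subset TM^d$ is a smooth, parallel, time-independent distribution of constant rank $r \in \{1, \dots, d\}$. Parallelism of both $\ker T$ and $(\ker T)^\perp$ forces the mixed sectional curvature to vanish, and the Gauss equation applied to unit $v \in \ker T$ and unit $w \in (\ker T)^\perp$ which is an $A$-eigenvector with eigenvalue $\lambda_w$ then gives $0 = R(v,w,v,w) = \lambda_d \lambda_w = (M_0 H)\lambda_w$, hence $\lambda_w = 0$. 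Therefore $A$ has eigenvalues $M_0 H$ (multiplicity $r$) and $0$ (multiplicity $d - r$) on $M^d \times (-\delta, 0]$; in particular $A \geq 0$, which proves the first conclusion.

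For the moreover part, if $A > 0$ at $t = 0$, the zero eigenvalue of multiplicity $d - r$ cannot occur, forcing $r = d$, i.e., $\ker T = TM^d$, hence $T \equiv 0$ and $A = M_0 H g$ on all of $M^d \times (-\delta, 0]$ (using time-independence of the rank). Tracing yields $M_0 = 1/d$ and $A = \frac{H}{d} g$, so each $M_t^d$ is totally umbilic. The classical rigidity for connected, complete, totally umbilic hypersurfaces of $\mathbb{R}^{d+1}$ with $H > 0$ then forces $M_0^d$ to be a round sphere, hence closed and with $X_0$ an isometric embedding. The main technical obstacle I anticipate is the non-smoothness of $\lambda_d$ at eigenvalue crossings, which is why both steps must be formulated in viscosity/tensor form and invoked from the cited references rather than carried out naively.
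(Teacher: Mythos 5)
Your proof is correct and follows essentially the same route as the paper: the tensor you call $T = M_0 H g - A$ is exactly the paper's $N = (1-\beta)Hg - A$ with $M_0 = 1-\beta$, and both apply Hamilton's strong maximum principle for symmetric $2$-tensors to produce a parallel kernel distribution, then invoke the Gauss equation on the resulting local de Rham splitting to force the remaining principal curvatures to vanish. The one structural addition in your argument is the preliminary scalar step establishing $\lambda_d \equiv M_0 H$ via a viscosity strong maximum principle; this is redundant, since the tensor strong maximum principle already yields nontriviality of $\ker T$ at every point for $t<0$, which is precisely that statement -- the paper skips it entirely and thereby avoids any discussion of the merely-Lipschitz regularity of $\lambda_d$. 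In the moreover part you write that $A>0$ at $t=0$ forces $r=d$ ``using time-independence of the rank,'' but the rank conclusion from the strong maximum principle lives on the open interval $(-\delta,0)$, so one must pass to the limit $t\to 0^-$ (the zero eigenvalues persist because the $\lambda_d$-eigenvalues stay bounded away from zero); the paper carries this out explicitly with a convergent sequence of unit kernel vectors.
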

\begin{proof}
Define a tensor
$$M_{ij} = Hg_{ij} - A_{ij}$$
where $A$ is the 2nd fundamental form. Notice that $M$ and $A$ are simultaneously diagonalizable and if $V$ is the eigenvector of $A$ corresponding to $\lambda_i$, then it is also the eigenvector of $M$ corresponding to $\sum_{j \neq i}\lambda_j$.\\

Let $$\beta = \frac{\lambda_1 + .. + \lambda_{d-1}}{H}(p_0,0)$$
and consider $N = M - \beta Hg$. The assumption then implies that the symmetric (0,2) tensor $N$ is nonnegative in $M^d \times (-T, 0]$ and has a nontrivial kernel at $(p_0,0)$. Note that $N$ satisfies the following evolution equation
$$(\nabla_t - \Delta)N_{ab} = |A|^2N_{ab}$$
Therefore, the strong maximum principle (\cite{chow2007ricci}, theorem 12.50) applied to $N$ implies that \\\\
(i) There exists $\delta > 0$ so that 
$$E(t) = \cup_{x \in M^d}E_{(x,t)} = \cup_{x \in M^d}\textit{{\normalfont ker}}(N_{(x,t)}) \leq TM$$
is a smooth vector bundle of rank $l > 0$ for all $t \in (-\delta, 0)$.\\\\
(ii) The bundle $E(t)$ is invariant under parallel transport w.r.t $g(t)$ for all $t \in (-\delta, 0)$.\\

Then for each $t \in (-\delta, 0)$, $p \in M^d$, one can locally isometrically split the manifold $(M^d, g(t))$ near $p$ into a direct product $(U \times V, g_1 \oplus g_2)$ where $U$ is the integral manifold of $E(t)$ and $V$ is the integral manifold of $E(t)^{\perp}$ where $\perp$ is taken w.r.t $g(t)$. \\

Let $e_i = e_i(p,t)$ be the eigenvector of $A$ corresponding to $\lambda_i$ at $(p,t)$. Then because $e_{d}(p,t)$ is the 1st eigenvector of $N$, it must be contained in $E(t)$, hence
$$\beta = \frac{\lambda_1 + .. + \lambda_{d-1}}{H}(p,t)$$
Therefore, $e_i$ is an eigenvector of $N$ corresponding to eigenvalue 
$$\sum_{j \neq i}\lambda_j - \beta H = \lambda_d - \lambda_i $$
Thus either $e_i \in E(t)$ or $e_i \in E(t)^{\perp}$. \\

If $e_i \in E(t)$, then $\lambda_i = \lambda_d$. If $e_i \in E(t)^{\perp}$, then by the isometric splitting, 
$$\lambda_i\lambda_d = \textit{{\normalfont sec}}(e_i, e_d) = 0$$
hence $\lambda_i = 0$. Here, $\textit{{\normalfont sec}}(e_i, e_d)$ is the sectional curvature in directions $e_i$ and $e_d$. This together with the fact that $E(t)$ has rank $l > 0$ implies that at each $(p,t) \in M^d \times (-\delta, 0)$, the eigenvalues of $A_{(p,t)}$ are either $0$ or $\lambda_d$, with the corresponding eigenspaces having dimension $d - l$ and $l$ respectively. Thus for any possible $l > 0$, $A \geq 0$ in $M^d \times (-\delta, 0)$. Then by continuity, $A \geq 0$ up to time $t = 0$ as well, thus proving the first assertion.\\

If we additionally assume $A > 0$ at $t = 0$, then $l = d$. If not, for each $p \in M^d$, since $A_{(p,t)}$ has a nontrivial kernel for all $t \in (-\delta, 0)$, we can find $v(p,t) \in T_pM^d$ so that
$$A_{(p,t)}(v(p,t), v(p,t)) = 0 \ \ \ \text{ and } \ \ \ |v(p,t)|_{g(0)} = 1$$
Then we can find a sequence $t_j \to 0$ and $v_0 = v_0(p) \in T_pM^d$ so that
$$v(p, t_j) \to v_0 \text{ in }T_pM^d$$
In particular, $|v_0|_{g(0)} = 1$ as well. We also have
$$A_{(p,0)}(v_0, v_0) = \lim_{j \to \infty}A_{(p, t_j)}(v(p, t_j), v(p, t_j)) = 0$$
This contradicts the fact that $A > 0$ at $t = 0$, hence $l = d$.
This means that all the eigenvalues of $A$ are equal to $\lambda_d$. Therefore we have the relation
$$A = \frac{1}{d}Hg$$
for all $M^d \times (-\delta, 0]$.
Since $d \geq 2$, the Codazzi equation implies that 
$$\nabla A = 0$$
and since $H > 0$, we immediately see that $M^d$ is closed by Myers theorem, and the principal curvatures of $X_0 : M^d \to \mathbb{R}^{d+1}$ are all fixed positive constant, which implies that $X_0$ is an embedding with its image being a round sphere of radius $\frac{d}{H(0)}$ with
$$H(0) = H(q,0)$$
for any $q \in M^d$.    
\end{proof}

We now prove theorem 1.3 and theorem 1.4.
\begin{proof}[Proof of theorem 1.3]
Trivially $(1)$ implies $(2) - (5)$. By \cite{10.4310/jdg/1442364652}, theorem 1.1, $(2)$ implies $(1)$. We claim that either $(4)$ or $(5)$ implies $(3)$. By the weak maximum principle, it is easy to see that for all sufficiently small $t$
$$\min_{M^d \times \{t\}}H \leq \frac{C}{\sqrt{-t}}$$
for some fixed constant $C > 0$. Thus $(4)$ implies $(5)$. Once we assume type I curvature decay, $\frac{C_0}{\sqrt{-t}} \leq \phi(t) \leq \frac{C_1}{\sqrt{-t}}$
for some fixed constant $C_1 > C_0 >  0$ for all $t \leq T_0$ for some fixed $T_0 \in (-\infty, 0)$.  Then by the evolution equation
$$|X(p,t) - X(p, T_0)| \leq \int_{t}^{T_0}\frac{C_1}{\sqrt{-s}}ds \leq C_2\sqrt{-t}$$
for all $t \leq T_0$ and $p \in M^d$. Therefore
$$\textit{{\normalfont diam}}(M_t^d) \leq 2C_2\sqrt{-t} + \textit{{\normalfont diam}}(M_{T_0}^d) \leq C_3\sqrt{-t} \leq \frac{C_4}{\phi(t)}$$
for all sufficiently small $t$. Thus
$$\limsup_{t \to -\infty}\phi(t)\textit{{\normalfont diam}}(M_t^d) < \infty$$
hence by lemma 3.1, we have (3). \\

We now claim that $(3)$ implies that our solution is actually locally uniformly convex. Our proof follows that of \cite{10.4310/jdg/1442364652}, theorem 4.2. First note that 
$$\liminf_{t \to -\infty}\min_{M^d \times \{t\}}\frac{\lambda_1 + .. + \lambda_k}{H} > -\infty$$
for some $1 \leq k \leq d-1$ implies
$$\liminf_{t \to -\infty}\min_{M^d \times \{t\}}\frac{\lambda_1 + .. + \lambda_{d-1}}{H} > -\infty$$
Since
$$\lambda_i \geq \frac{\lambda_1 + .. + \lambda_k}{k}$$
for all $k < i \leq d-1$, one has
$$\liminf_{t \to -\infty}\min_{M^d \times \{t\}}\frac{\lambda_1 + .. + \lambda_{d-1}}{H} \geq \frac{d-1}{k}(\liminf_{t \to -\infty}\min_{M^d \times \{t\}}\frac{\lambda_1 + .. + \lambda_k}{H}) > -\infty$$
Therefore, it is enough to prove local uniform convexity assuming $k = d-1$. \\

We next claim that 
$$t \to \min_{M^d\times \{t\}}\frac{\lambda_1 + .. + \lambda_{d-1}}{H}$$
is nondecreasing in $t$. As in the proof of lemma 3.2, for any $a \in \mathbb{R}$ the tensor
$M = aHg - A$ satisfies the evolution equation
$$(\nabla_{t} - \Delta)M_{ab} = |A|^2M_{ab}$$
Now let $t_1 < t_2$ and let 
$$a = 1 - \min_{M^d \times \{t_1\}}\frac{\lambda_1 + .. + \lambda_{d-1}}{H}$$
Then at $t_1$, we see that $M \geq 0$. Since $(M^d, g(t))$ is complete with bounded curvature, we can apply weak maximum principle to conclude that
$$M \geq 0$$
for all $t \geq t_1$. This implies that $$Hg - A \geq (\min_{M^d \times \{t_1\}}\frac{\lambda_1 + .. + \lambda_{d-1}}{H})Hg$$
at $t = t_2$. Since 
$$\frac{\lambda_1 + .. + \lambda_{d-1}}{H}$$
is the smallest eigenvalue of $Hg - A$, we conclude that
$$\min_{M^d \times \{t_2\}}\frac{\lambda_{1} + .. + \lambda_{d-1}}{H} \geq \min_{M^d \times \{t_1\}}\frac{\lambda_1 + .. + \lambda_{d-1}}{H}$$
which proves the claim. \\

Set
$$\beta = \liminf_{t \to -\infty}\min_{M^d \times \{t\}}\frac{\lambda_1 + .. + \lambda_{d-1}}{H}$$
Then the claim implies that
$$\min_{M^d \times \{t\}}\frac{\lambda_1 + .. + \lambda_{d-1}}{H} \geq \beta$$
for all $t \in (-\infty, 0)$, or equivalently
$$Hg - A -\beta Hg \geq 0 \text{ in }M^d \times (-\infty, 0)$$
and we can find $t_j \to -\infty$ and $p_j \in M^d$ so that
$$\frac{\lambda_1 + .. + \lambda_{d-1}}{H}(p_j, t_j) \to \beta$$
We define rescaled flows 
$$X_j(p,t) = \phi(t_j)(X(p, t_j + \phi^{-2}(t_j)t) - X(p_j, t_j))$$
defined for $t \in (-\infty, 0]$. By the definition of $\phi(t_j)$, the mean curvature of the rescaled flows are uniformly bounded by 1. Therefore the rescaled flows have uniform bounds on all higher order derivatives of the curvature due to the condition $(1 - \beta)Hg - A \geq 0 $ and Ecker-Huisken interior gradient estimate. Also, by our definition of $X_j$, the rescaled flows all pass the origin at $t = 0$. Therefore, we can extract a limit flow $X_{\infty} : M^d_{\infty} \times (-\infty, 0] \to \mathbb{R}^{d+1}$ where $X_j \to X_{\infty}$ smoothly on compact subsets of $M^{d}_{\infty} \times (-\infty, 0]$. The limit flow $X_{\infty}$ is a complete, weakly mean convex solution to mean curvature flow. We claim that it is actually strictly mean convex. By the strong maximum principle, either $H_{\infty}$ is never zero or identically zero. In the latter case, since $$(1 -\beta)H_{\infty}g_{\infty} - A_{\infty}\geq 0$$
we have $$A_{\infty} \leq 0$$
Since $H_{\infty} = 0$ is the trace of a nonpositive tensor $A_{\infty}$, the only possibility is when $A_{\infty} = 0$. Combined with completeness, this implies that the limit flow is a static hyperplane in $\mathbb{R}^{d+1}$. On the other hand, by lemma 3.1, the time slices of the rescaled flows at $t = 0$ are all contained in a fixed ball of finite radius in $\mathbb{R}^{d+1}$ for all sufficiently large $j$. This implies that the time slice of the limit flow at $t = 0$ is also uniformly bounded in $\mathbb{R}^{d+1}$ which is a contradiction. Thus $H_{\infty} > 0$ everywhere, and the quotient $\frac{\lambda_1 + .. + \lambda_{d-1}}{H}$ passes through the limit. Therefore on the limit flow, we have the curvature condition
$$\frac{\lambda_1 + .. + \lambda_{d-1}}{H} \geq \beta$$
and by our definition of $(p_j, t_j)$, there exists a point $p_{\infty} \in M_{\infty}^d$ so that
$$\frac{\lambda_1 + .. + \lambda_{d-1}}{H}(p_{\infty}, 0) = \beta$$
This means $\beta = \frac{\lambda_1 + .. + \lambda_{d-1}}{H}(p_{\infty}, 0)$ is a spacetime interior minimum of $\frac{\lambda_1 + .. + \lambda_{d-1}}{H}$. Then lemma 3.2 implies that
$$A_{\infty} \geq 0$$
on $M_{\infty}^d \times (-\delta, 0]$
for some $\delta > 0$. \\

We claim that $A_{\infty} > 0$ in $M_{\infty}^d \times \{0\}$. If $A_{\infty}$ has a nontrivial kernel at $t = 0$, then the strong maximum principle combined with completeness implies that the flow globally isometrically splits off a line. In particular, the time slices of the limit flow are
unbounded. This once again contradicts the fact that the time slices of the rescaled flows at $t = 0$ are uniformly bounded for all sufficiently large $j$. Thus $A_{\infty} > 0$ when $t = 0$. \\

Then lemma 3.2 implies that $M_{\infty}^d$ is closed and $X_{\infty}(\cdot, 0)$ is an isometric embedding into $\mathbb{R}^{d+1}$ with its image being a round sphere. In particular, the local smooth convergence at $t = 0$ is actually global smooth convergence, hence the rescaled second fundamental form $\phi(t_j)^{-1}A(\cdot, t_j)$ converge uniformly smoothly to the second fundamental form of a round sphere of fixed radius as $j \to \infty$. Since a sphere is uniformly strictly convex, there exists $j_0$ so that for all $j \geq j_0$
$$A(\cdot, t_j) \geq 0$$
This implies that
$$\limsup_{t \to -\infty}\min_{M^d \times\{t\}}\frac{\lambda_1}{H} \geq 0$$
By applying the maximum principle to $A$ as we did for $M_{ab}$, we see that 
$$t \to \min_{M^d \times \{t\}}\frac{\lambda_1}{H}$$
is nondecreasing in $t$, thus
$$\liminf_{t \to -\infty}\min_{M^d \times \{t\}}\frac{\lambda_1}{H} = \limsup_{t \to -\infty}\min_{M^d \times \{t\}}\frac{\lambda_1}{H} \geq 0$$
Thus the original flow is weakly convex, and since it is closed, it is locally uniformly convex by strong maximum principle applied to $A$. Also, because one always has a lower bound
$$\frac{1}{\sqrt{-t}} \leq C\phi(t)$$
for some fixed $C > 0$ for all sufficiently small $t$, by lemma 3.1
$$\limsup_{t \to -\infty}\frac{\textit{{\normalfont diam}}(M_t^d)}{\sqrt{-t}} \leq C\limsup_{t \to -\infty}\phi(t)\textit{{\normalfont diam}}(M_t^d) < \infty$$
thus by \cite{10.4310/jdg/1442364652}, theorem 1.1, $\{M_t^d\}_{t\in (-\infty, 0)}$ is a family of shrinking round spheres. Therefore either (3), (4) or (5) implies (1), thus completing the proof of theorem 1.3. \end{proof}
\begin{proof}[Proof of theorem 1.4]
Assume otherwise. Then 
$$-\epsilon_0 = \liminf_{t \to -\infty}\inf_{M^d \times \{t\}}\frac{\lambda_1 + .. + \lambda_{k}}{H} < 0 $$
As in the proof of theorem 1.3, both
$$t \to \inf_{M^d \times \{t\}}\frac{\lambda_1 + .. + \lambda_k}{H}$$
is nondecreasing in $t$ when either $k = 1$ or $k = d-1$. Therefore
$$\inf_{M^d \times \{t\}}\frac{\lambda_1 + .. + \lambda_{k}}{H} \geq -\epsilon_0$$
and we can find $t_j \to -\infty$ and $p_j \in M^d$ so that
$$\frac{\lambda_1 + .. + \lambda_{k}}{H}(p_j, t_j) \to -\epsilon_0$$
We define rescaled flows 
$$X_j(p,t) = \phi(t_j)(X(p, t_j + \phi^{-2}(t_j)t) - X(p_j, t_j))$$
defined for $t \in (-\infty, 0]$. Then as in the proof of theorem 1.3, we can extract a limit flow $X_{\infty} : M^d_{\infty} \times (-\infty, 0] \to \mathbb{R}^{d+1}$ where $X_j \to X_{\infty}$ smoothly on compact subsets of $M^{d}_{\infty} \times (-\infty, 0]$. Also the `flatness at low curvature region' condition implies that
$$\frac{H(p_j, t_j)}{\phi(t_j)} \geq \delta_0 > 0$$
    for some fixed positive constant $\delta_0$ which corresponds to the $\delta$ in the statement of the flatness condition when $\epsilon = \frac{\epsilon_0}{2}$. This ensures that the limit is nonflat. Thus, the limit flow $X_{\infty} :M^d_{\infty} \times (-\infty, 0] \to \mathbb{R}^{d+1}$ is a complete, strictly mean convex solution to mean curvature flow. Then the quotient $\frac{\lambda_1 + .. + \lambda_{k}}{H}$ also passes through the limit, hence on the limit flow, we have the curvature condition
$$\frac{\lambda_1 + .. + \lambda_{k}}{H} \geq -\epsilon_0$$
and by our definition of $(p_j, t_j)$, there exists a point $p_{\infty} \in M_{\infty}^d$ so that
$$\frac{\lambda_1 + .. + \lambda_{k}}{H}(p_{\infty}, 0) = -\epsilon_0$$
This means $-\epsilon_0 = \frac{\lambda_1 + .. + \lambda_{k}}{H}(p_{\infty}, 0)$ is a strictly negative spacetime interior minimum of $\frac{\lambda_1 + .. + \lambda_k}{H}$. In the case when $k = 1$, this contradicts \cite{haslhofer2017mean}, proposition A.1, and when $k = d-1$, this contradicts lemma 3.2. Thus in any case, $-\epsilon_0 \geq 0$, thus we have weak $k$ convexity when $k = 1$ or $k = d-1$, thus proving theorem 1.4.    
\end{proof}
\begin{remark}
   The extra `flatness' condition is only used to prevent the limit flow from being a static hyperplane. Also, note that the scaling factor $\phi(t_j)$ is chosen to be able to extract a smooth limit. Therefore, as long as we can find some other conditions which ensures smooth convergence to a nonflat limit by possibly taking different scales, the argument holds without extra modification.
\end{remark}

\bibliographystyle{plain}
\bibliography{ref}
\end{document}